\renewcommand{\@seccntformat}[1]{{\csname the#1\endcsname}.\hspace{.5em}}
\newtheorem{thm}{Theorem}[section]
\newtheorem{lem}[thm]{Lemma}
\renewcommand{\qed}{\hfill$\Box$\medskip}
\renewcommand{\thefootnote}{*}
\numberwithin{equation}{section}
\begin{document}

\begin{center}
{\large\bf On two congruences involving Ap\'{e}ry and Franel numbers}
\end{center}

\vskip 2mm \centerline{Guo-Shuai Mao}
\begin{center}
{\footnotesize $^1$Department of Mathematics, Nanjing
University of Information Science and Technology, Nanjing 210044,  People's Republic of China\\
{\tt maogsmath@163.com  } }
\end{center}


\vskip 0.7cm \noindent{\bf Abstract.}
In this paper, we generalise a congruence which proved by V. J.-W. Guo and J. Zeng \cite{gz-jnt-2012} involving Ap\'{e}ry numbers,
and we obtain a congruence involving Franel numbers which confirms a congruence conjecture of Z.-W. Sun \cite[Conjecture 57(ii)]{sun-njdx-2019}.

\vskip 3mm \noindent {\it Keywords}: Congruences; Ap\'{e}ry numbers; Legendre symbol; Franel numbers.

\vskip 0.2cm \noindent{\it AMS Subject Classifications:} 11A07, 05A10, 11B65.

\renewcommand{\thefootnote}{**}

\section{Introduction}
Recall that the Bernoulli polynomials are defined as
$$B_n(x)=\sum_{k=0}^n\binom nkB_kx^{n-k}\ \ (n\in\mathbb{N}).$$

The $n$th generalised harmonic number is given by 
$$H_n^{(k)}=\sum_{j=1}^n\frac1{j^k}$$
and the $n$th classic harmonic number is known as $H_n=H_n^{(1)}$.

The Ap\'{e}ry numbers are defined by
$$A_n=\sum_{k=0}^n\binom{n}{k}^2\binom{n+k}k^2\ \ (n\in\mathbb{N}).$$
In the past decades, congruences involving Ap\'{e}ry numbers have attracted the attention of many researchers (see, for instance, \cite{beu-jnt-1987,ccc-jnt-1980,gessel-jnt-1982,gz-jnt-2012}).
In 2012, Guo and Zeng proved the following congruence conjectured by Sun \cite{sun-jnt-2012}: If $p>3$ is a prime, then
\begin{align*}
&\frac1p\sum_{k=0}^{p-1}(-1)^k(2k+1)A_k\equiv \left(\frac{p}{3}\right)\pmod{p^2}.
\end{align*}
where $\left(\frac{\cdot}{p}\right)$ denotes the Legendre symbol.

The main objective of this paper is to prove the following result, which generalises the above congruence.
\begin{thm}\label{ThAp} Let $p>3$ be a prime. Then we have
\begin{align}
&\frac1p\sum_{k=0}^{p-1}(-1)^k(2k+1)A_k\equiv \left(\frac{p}{3}\right)+\frac{p^2}6B_{p-2}\left(\frac13\right)\pmod{p^3}.\label{1pak}
\end{align}
\end{thm}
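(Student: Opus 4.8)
The plan is to refine, by one $p$-adic order, the argument Guo and Zeng \cite{gz-jnt-2012} used for the weaker mod $p^2$ congruence. Using $\binom kj^2\binom{k+j}j^2=\binom{2j}j^2\binom{k+j}{2j}^2$ and interchanging the order of summation, one has
\[\sum_{k=0}^{p-1}(-1)^k(2k+1)A_k=\sum_{j=0}^{p-1}(-1)^j\binom{2j}j^2\,S_j,\qquad S_j:=\sum_{i=0}^{p-1-j}(-1)^i(2i+2j+1)\binom{2j+i}i^2 .\]
The inner sum $S_j$ can be treated by creative telescoping (the WZ method), and combining the outcome with standard central-binomial congruences — e.g.\ $\sum_{k=0}^{p-1}\binom{2k}k\equiv\bigl(\tfrac p3\bigr)\pmod{p^2}$ and related facts — gives the Guo--Zeng evaluation $\tfrac1p\sum_{k=0}^{p-1}(-1)^k(2k+1)A_k\equiv\bigl(\tfrac p3\bigr)\pmod{p^2}$. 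I would run this same chain while retaining every term through order $p^4$.

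The first task is to recompute $S_j$, and then the full double sum, to that precision. One splits $j$ according to whether $\binom{2j}j$ is a $p$-adic unit ($j\le(p-1)/2$) or carries a factor $p$ ($(p+1)/2\le j\le p-1$), and likewise follows $v_p\bigl(\binom{2j+i}i\bigr)$; for the ``overshoot'' terms of $S_j$ (those with $2j+i\ge p$) the binomial coefficients expand, via $\binom{p+m}m=\prod_{r=1}^m(1+p/r)=1+pH_m+\tfrac{p^2}2\bigl(H_m^2-H_m^{(2)}\bigr)+\cdots$, into $p$-power series with harmonic-number coefficients. Wolstenholme's congruences $H_{p-1}\equiv0\pmod{p^2}$ and $H_{p-1}^{(2)}\equiv0\pmod p$, together with the extra $p$ in $\binom{2j}j$ for $j>p/2$, annihilate the bulk of these, leaving — modulo $p^4$ — the term $p\bigl(\tfrac p3\bigr)$, which recovers Guo--Zeng, plus $p^3$ times a weighted truncated sum of reciprocal squares. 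The linear, Fermat-quotient, contributions $\sum_{k=1}^{\lfloor p/3\rfloor}1/k\equiv-\tfrac32q_p(3)\pmod p$ (where $q_p(3)=(3^{p-1}-1)/p$) must cancel among the blocks, as they must since $q_p(3)$ is absent from \eqref{1pak}.

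The final step converts the surviving sum into the Bernoulli value. By Euler's power-sum formula, $\sum_{k=1}^m k^{p-3}=\bigl(B_{p-2}(m+1)-B_{p-2}\bigr)/(p-2)$, with $B_{p-2}=0$ since $p-2$ is odd; using $k^{p-3}\equiv1/k^2\pmod p$, the congruence $\lfloor p/3\rfloor+1\equiv\tfrac13$ or $\tfrac23\pmod p$ according as $p\equiv2$ or $1\pmod 3$, and the reflection $B_n(1-x)=(-1)^nB_n(x)$, one finds $\sum_{k=1}^{\lfloor p/3\rfloor}1/k^2\equiv\tfrac12\bigl(\tfrac p3\bigr)B_{p-2}\bigl(\tfrac13\bigr)\pmod p$. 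Feeding this back — the weight produced by the reduction carries a compensating factor $\bigl(\tfrac p3\bigr)$, so $\bigl(\tfrac p3\bigr)^2=1$ removes the Legendre symbol from the error term — and simplifying the resulting rational constant produces the coefficient $\tfrac16$, which gives \eqref{1pak}.

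The main obstacle is the middle step. One must expand \emph{every} binomial coefficient occurring in $S_j$ and in the outer factor $\binom{2j}j^2$ to precision $p^4$, keep exact account of where the powers of $p$ sit as $j$ and $i$ cross the thresholds $p/2$ and $p$, and verify that each $O(p)$- and $O(p^2)$-order contribution either vanishes (by Wolstenholme, or by the extra $p$ in $\binom{2j}j$) or assembles precisely into $\tfrac{p^3}6B_{p-2}\bigl(\tfrac13\bigr)$ — in particular that the $\sum 1/k$ part decouples cleanly from the $\sum 1/k^2$ part and cancels. A subsidiary but unavoidable point is treating $p\equiv1$ and $p\equiv2\pmod 3$ uniformly, which is precisely what produces the Legendre symbol $\bigl(\tfrac p3\bigr)$ and fixes the signs in the Bernoulli term.
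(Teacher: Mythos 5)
Your proposal is a plan rather than a proof, and the plan leaves the genuinely hard part undone. The opening reduction is fine: $\binom kj^2\binom{k+j}j^2=\binom{2j}j^2\binom{k+j}{2j}^2$ does give $\sum_{k=0}^{p-1}(-1)^k(2k+1)A_k=\sum_j(-1)^j\binom{2j}j^2S_j$, and your closing computation $\sum_{k=1}^{\lfloor p/3\rfloor}1/k^2\equiv\frac12\left(\frac p3\right)B_{p-2}\left(\frac13\right)\pmod p$ via Euler's power-sum formula and the reflection $B_n(1-x)=(-1)^nB_n(x)$ is correct. But everything in between is asserted, not established. You never actually perform the creative telescoping on $S_j$, never exhibit the closed or certified form it produces, and never carry out the order-$p^4$ expansion whose outcome you describe. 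The entire content of the theorem beyond Guo--Zeng's mod $p^2$ result is the identification of the $p^2$-order correction as exactly $\frac16B_{p-2}\left(\frac13\right)$, and that identification is precisely the step you label ``the main obstacle'' and do not resolve. Worse, the one cancellation you do discuss --- the disappearance of the Fermat-quotient terms $q_p(3)$ --- is justified by ``as they must since $q_p(3)$ is absent from \eqref{1pak}'', which is assuming the conclusion; a proof must derive that cancellation from the structure of the sums, not from the statement being proved.

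For comparison, the paper takes a different and fully explicit route: it starts from Guo and Zeng's transformation $\frac1p\sum_{k=0}^{p-1}(-1)^k(2k+1)A_k=\sum_{k=0}^{p-1}\binom{2k}k\sum_{j=0}^k\binom kj\binom{k+j}j\binom{p-1}{k+j}\binom{p+k+j}{k+j}$, expands $\binom{p-1}{k+j}\binom{p+k+j}{k+j}\equiv(-1)^{k+j}(1-p^2H_{k+j}^{(2)})\pmod{p^3}$, evaluates the main term by Chu--Vandermonde as $\sum_{k=0}^{p-1}\binom{2k}k$ (known mod $p^3$ by Mattarei--Tauraso), and then isolates two correction terms: the tail $\sum_{k=(p+1)/2}^{p-1}\binom{2k}k\sum_{j=p-k}^k\binom kj\binom{k+j}j(-1)^{k+j}\equiv-p^2B_{p-2}\left(\frac13\right)\pmod{p^3}$ (Lemma \ref{2kkp+1}, itself resting on Lemmas \ref{3-3}--\ref{p-2k-k}) and the $H_{k+j}^{(2)}$ term, which is converted to $3\sum_{j\le k}1/(j^2\binom{2j}j)$ by an identity of Liu and then to a Bernoulli value. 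Each of these steps has an analogue that your sketch would need but does not supply: you would have to prove, for your decomposition, the precise analogues of Lemmas \ref{2j2k}--\ref{2kkp+1} (or the telescoped form of $S_j$ together with the exact weights multiplying $\sum 1/k$ and $\sum 1/k^2$ in each residue block). Until that middle computation is actually done, the proposal does not constitute a proof.
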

The well known Franel numbers are defined as
$$f_n=\sum_{k=0}^n\binom{n}k^3\ \ (n\in\mathbb{N}),$$
which satisfy the recurrence:
$$(n+1)^2f_n=(7n^2+7n+2)f_n+8n^2f_{n-1}.$$
Strehl \cite{str-mtn-1993} gave the following identity:
\begin{align}
f_n=\sum_{k=0}^n\binom{n}k\binom{k}{n-k}\binom{2k}k=\sum_{k=0}^n\binom{n}k^2\binom{2k}n.\label{fnidentity}
\end{align}
Many people (see \cite{g-itsf-2013,jv-rama-2010,sun-aam-2013,sun-jnt-2013}) have studied congruences for Franel numbers. For instance, Sun [Theorem 1.1]\cite{sun-aam-2013} proved that for any prime $p>3$,
$$\sum_{k=0}^{p-1}(-1)^kf_k\equiv\left(\frac{p}{3}\right)\pmod{p^2}.$$
The second aim of this paper is to prove the following congruence which conjectured by Sun \cite{sun-njdx-2019}.
\begin{thm}\label{Thfp} For any prime $p>3$, we have
\begin{align}
&\sum_{k=0}^{p-1}(-1)^kf_k\equiv \left(\frac{p}{3}\right)+\frac{2p^2}3B_{p-2}\left(\frac13\right)\pmod{p^3}.\label{1pfk}
\end{align}
\end{thm}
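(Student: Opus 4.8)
The plan is to deduce Theorem~\ref{Thfp} from Theorem~\ref{ThAp}, the bridge between the two being the classical identity
\[
A_n=\sum_{k=0}^{n}\binom{n}{k}\binom{n+k}{k}f_k\qquad(n\in\mathbb N),
\]
which expresses the Ap\'ery numbers through the Franel numbers. Substituting this into $\sum_{n=0}^{p-1}(-1)^n(2n+1)A_n$ and interchanging the order of summation, one is reduced to evaluating, for each $k$, the finite sum $\sum_{n=k}^{p-1}(-1)^n(2n+1)\binom{n}{k}\binom{n+k}{k}$.

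The first step is the closed form
\[
\sum_{n=k}^{N-1}(-1)^n(2n+1)\binom{n}{k}\binom{n+k}{k}=(-1)^{N-1}(2k+1)\binom{2k}{k}\binom{N+k}{2k+1}\qquad(0\le k\le N),
\]
a telescoping identity, verifiable by induction on $N$ via Pascal's rule or by Gosper's algorithm with the evident certificate. Taking $N=p$ and simplifying by means of $\binom{p+k}{2k+1}=\frac{p\prod_{j=1}^{k}(p^2-j^2)}{(2k+1)!}$ together with $(2k+1)\binom{2k}{k}=\frac{(2k+1)!}{(k!)^2}$, the interchange of summation produces the \emph{exact} identity
\[
\sum_{n=0}^{p-1}(-1)^n(2n+1)A_n=p\sum_{k=0}^{p-1}(-1)^k f_k\prod_{j=1}^{k}\Bigl(1-\frac{p^2}{j^2}\Bigr).
\]
Since $H_k^{(2)}$ is $p$-integral for $0\le k\le p-1$, expanding the product yields $\prod_{j=1}^{k}(1-p^2/j^2)\equiv 1-p^2 H_k^{(2)}\pmod{p^4}$, and hence
\[
\frac1p\sum_{n=0}^{p-1}(-1)^n(2n+1)A_n\equiv\sum_{k=0}^{p-1}(-1)^k f_k-p^2\sum_{k=0}^{p-1}(-1)^k f_k H_k^{(2)}\pmod{p^3}.
\]

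Comparing this with Theorem~\ref{ThAp} and using that $B_{p-2}(1/3)$ is $p$-integral, we see (as $\tfrac23-\tfrac16=\tfrac12$) that Theorem~\ref{Thfp} is \emph{equivalent} to the single mod-$p$ congruence
\[
\sum_{k=0}^{p-1}(-1)^k f_k H_k^{(2)}\equiv\frac12\,B_{p-2}\!\left(\frac13\right)\pmod p.
\]
To establish this I would insert Strehl's identity \eqref{fnidentity}, $f_k=\sum_{j}\binom{k}{j}^2\binom{2j}{k}$, interchange the two summations, and evaluate the resulting double sum modulo $p$ in terms of truncated harmonic sums $\sum 1/j$ and $\sum 1/j^2$ taken over residue classes modulo $3$; the value $B_{p-2}(1/3)$ then enters through the standard congruences expressing it in terms of such sums (of the type going back to Lehmer and studied extensively by Z.-W. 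Sun). I expect this last congruence to be the main obstacle: rewriting the binomial double sum as harmonic sums modulo $p$ is mechanical, but pinning the result down to exactly $\tfrac12 B_{p-2}(1/3)$—rather than merely to some $p$-adic unit multiple of it—requires careful bookkeeping of second-order harmonic sums and of the arithmetic of Bernoulli polynomials at $1/3$. Everything else (the telescoping identity, the binomial simplifications, and the appeal to Theorem~\ref{ThAp}) is routine.
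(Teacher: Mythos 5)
Your reduction is correct, and it is in substance identical to the paper's first proof. The telescoping identity $\sum_{n=k}^{N-1}(-1)^n(2n+1)\binom nk\binom{n+k}k=(-1)^{N-1}(2k+1)\binom{2k}k\binom{N+k}{2k+1}$ checks out (the inductive step reduces to $(2k+1)\binom{2k}k\bigl[\binom{N+1+k}{2k+1}+\binom{N+k}{2k+1}\bigr]=(2N+1)\binom Nk\binom{N+k}k$, which is a one-line factorial computation), and combined with the inverse relation $A_n=\sum_k\binom nk\binom{n+k}kf_k$ it yields exactly the identity the paper quotes, namely $\frac1p\sum_{k=0}^{p-1}(-1)^k(2k+1)A_k=\sum_{k=0}^{p-1}\binom{p-1}k\binom{p+k}kf_k$, since $\binom{p-1}k\binom{p+k}k=(-1)^k\prod_{j=1}^k(1-p^2/j^2)$. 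So you have rederived, rather than merely cited, the bridge identity; from there the expansion $\prod_{j\le k}(1-p^2/j^2)\equiv1-p^2H_k^{(2)}\pmod{p^3}$ and the appeal to Theorem \ref{ThAp} proceed exactly as in the paper, and your conclusion that the theorem is equivalent to $\sum_{k=0}^{p-1}(-1)^kf_kH_k^{(2)}\equiv\frac12B_{p-2}\bigl(\frac13\bigr)\pmod p$ is the correct bookkeeping ($\frac23-\frac16=\frac12$).

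The gap is that this last congruence is not an afterthought to be dispatched by ``mechanical'' harmonic-sum manipulations: it is precisely the external input the paper invokes, namely (1.5) of Liu's preprint arXiv:2002.03650, where it is a main theorem with its own nontrivial proof (passing through sums such as $\sum_k\frac1{k^2\binom{2k}k}\sum_{j\le k}\frac{\binom{2j}j}j$ and the congruence $\sum_{j=1}^{(p-1)/2}\frac{H_j-2H_{2j}}{j(-3)^j}\equiv\frac{(\frac p3)}3B_{p-2}(\frac13)\pmod p$). Your sketch --- insert Strehl's identity, interchange, and express everything through harmonic sums in residue classes mod $3$ --- is a plausible opening move, but as you yourself note, landing on the exact constant $\frac12$ is where all the work lies, and nothing in the proposal carries that out. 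Until that single mod-$p$ congruence is proved (or properly cited), the argument is a correct reduction rather than a proof. It may also be worth knowing that the paper's second proof of Theorem \ref{Thfp} sidesteps $\sum(-1)^kf_kH_k^{(2)}$ entirely: it applies Strehl's identity \eqref{fnidentity} directly to $\sum(-1)^kf_k$, uses Chu--Vandermonde on the inner sum, and isolates the tail $\sum_{k\ge(p+1)/2}\binom{2k}k\sum_{j=p-k}^k\binom kj\binom{k+j}j(-1)^{k+j}\equiv-p^2B_{p-2}(\frac13)\pmod{p^3}$ (Lemma \ref{2kkp+1}) together with the Mattarei--Tauraso evaluation of $\sum_{k=0}^{p-1}\binom{2k}k$ modulo $p^3$; that route replaces the missing congruence by a different, but equally substantive, lemma.
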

We end the introduction by giving the organization of this paper.
We shall give some preliminary results first in Section 2, and then prove Theorem \ref{ThAp} in Section 3, Theorem \ref{Thfp} will be proved in Section 4 in two ways.
\section{Preliminary lemmas}
\begin{lem}\label{3-3} Let $p>3$ be a prime. Then
$$\sum_{k=1}^{(p-1)/2}\frac{(-3)^k}k\sum_{j=1}^k\frac1{(2j-1)(-3)^j}\equiv\frac16B_{p-2}\left(\frac13\right)\pmod{p}.$$
\end{lem}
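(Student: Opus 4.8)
The plan is to convert the double sum into a single sum whose value modulo $p$ is a known combination of Bernoulli numbers, and then recognize that combination as $\frac16 B_{p-2}(1/3)$ via the classical formula relating power sums of residues in an arithmetic progression to Bernoulli polynomials. First I would switch the order of summation: writing $S=\sum_{k=1}^{(p-1)/2}\frac{(-3)^k}{k}\sum_{j=1}^{k}\frac{1}{(2j-1)(-3)^j}$, interchanging gives $S=\sum_{j=1}^{(p-1)/2}\frac{1}{(2j-1)(-3)^j}\sum_{k=j}^{(p-1)/2}\frac{(-3)^k}{k}$. The inner sum $\sum_{k=j}^{(p-1)/2}\frac{(-3)^k}{k}$ can be written as $\left(\sum_{k=1}^{(p-1)/2}-\sum_{k=1}^{j-1}\right)\frac{(-3)^k}{k}$, so $S$ splits as a product-type term minus a genuine double sum; the hope is that the telescoping/symmetry between the two halves collapses things.

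An alternative, probably cleaner, route is to introduce the partial sums $T_k=\sum_{j=1}^k\frac{1}{(2j-1)(-3)^j}$ and use Abel summation on $\sum_k \frac{(-3)^k}{k}T_k$, turning it into a sum involving $\sum_{k}\frac{1}{(2k-1)(-3)^k}\cdot U_k$ where $U_k=\sum_{i=1}^k\frac{(-3)^i}{i}$ (or its tail). The key arithmetic input I would then invoke is the standard fact that for a prime $p>3$, sums such as $\sum_{k=1}^{(p-1)/2}\frac{(-3)^k}{k}$ and $\sum_{k=1}^{(p-1)/2}\frac{1}{(2k-1)(-3)^k}$ are each congruent modulo $p$ to explicit expressions in $q_p$-type quantities or in $B_{p-2}(1/3)$; in particular one uses $\sum_{k=1}^{p-1}\frac{x^k}{k}\equiv -\sum_{k=1}^{p-1}\frac{(1-x)^k}{k}\pmod p$ type identities, Fermat quotient manipulations, and the evaluation $\sum_{j=1}^{p-1}j^{\,p-2}\{j/3\}$ that produces $B_{p-2}(1/3)$. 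The number $3$ is special here because $-3$ is essentially $(1-4)$ and $\left(\frac{-3}{p}\right)$ governs whether $p\equiv 1$ or $2\pmod 3$, which is exactly the dichotomy that $B_{p-2}(1/3)$ feels.

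Concretely, the main steps in order: (1) reverse summation order and isolate the tail; (2) replace $\frac{1}{(2j-1)(-3)^j}$ using $\frac{1}{2j-1}\equiv -\frac{1}{2j}\cdot\frac{2j}{2j-1}$-style rewriting, or better, use the binomial identity $\sum_{j=1}^k \frac{1}{(2j-1)(-3)^j}$ has a closed form in terms of $\binom{2k}{k}$ and central binomial coefficients modulo $p$ (this is where Strehl/Apéry-type identities enter); (3) reduce the whole expression modulo $p$ to a single power sum $\sum_{k=1}^{(p-1)/2} k^{p-2}(-3)^{\,?}$ or to $\sum_{0<k<p/3} k^{p-2}$; (4) apply the formula $\sum_{k=1}^{m-1}k^{n}=\frac{1}{n+1}\big(B_{n+1}(m)-B_{n+1}\big)$ with $n=p-2$, $m=\lfloor p/3\rfloor+1$, together with the congruence $B_{p-2}(m/p)\equiv B_{p-2}(\{m/p\})$ and the functional/multiplication properties of Bernoulli polynomials, to land on $\frac16 B_{p-2}(1/3)$.

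The main obstacle will be step (3): controlling the $p$-adic valuation while swapping between the "$(p-1)/2$ with powers of $-3$" form and the "$<p/3$ plain power sum" form. The weights $(-3)^k$ and $1/(2j-1)$ do not individually reduce to anything clean mod $p$, so the argument must keep the double-sum structure intact until a symmetry (either the $k\leftrightarrow (p-1)/2-k$ reflection, or an Abel-summation cancellation) produces a telescoping collapse; getting the constant $1/6$ (rather than some other rational multiple) right is the delicate bookkeeping point, and I would double-check it against a small prime such as $p=7$ or $p=11$ before trusting the general manipulation.
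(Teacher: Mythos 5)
Your proposal is an outline rather than a proof, and the two concrete reductions you do commit to would not get you to the result. First, the splitting of the inner sum as $\bigl(\sum_{k=1}^{(p-1)/2}-\sum_{k=1}^{j-1}\bigr)\frac{(-3)^k}{k}$ produces a product of two single sums plus the leftover double sum $\sum_{j}\frac{1}{(2j-1)(-3)^j}\sum_{k=1}^{j-1}\frac{(-3)^k}{k}$, which is an Euler-type double sum of exactly the same depth and difficulty as the one you started with; nothing telescopes, and Abel summation merely shuffles the same obstruction around. Second, your steps (3)--(4) assert that the expression reduces to a plain power sum $\sum_{0<k<p/3}k^{p-2}$ to which the Bernoulli-polynomial formula applies, but you give no mechanism for stripping off the weights $1/(2j-1)$ and $(-3)^j$ simultaneously; you yourself flag this as ``the main obstacle,'' and it is precisely the content of the lemma, not a bookkeeping detail.

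For comparison, the paper's proof runs differently: it first replaces $j$ by $k-j$ inside the inner sum so that the summand becomes $\frac{(-3)^j}{2k-2j-1}$, swaps the order of summation, and then applies the partial-fraction identity $\frac{1}{k(2k-2j-1)}=\frac{1}{2j+1}\bigl(\frac{2}{2k-2j-1}-\frac1k\bigr)$. The inner sums are then harmonic numbers, and the congruences $H_{p-1-2j}\equiv H_{2j}$ and $H_{(p-1)/2-j}-H_{(p-1)/2}\equiv 2H_{2j}-H_j \pmod p$ collapse everything to $2\sum_{j=0}^{(p-3)/2}\frac{(-3)^j}{2j+1}\bigl(H_j-H_{(p-1)/2}\bigr)$. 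A final reflection $j\mapsto (p-1)/2-j$ converts the weight $\frac{(-3)^j}{2j+1}$ into $\frac{1}{j(-3)^j}$ (up to the factor $(-3)^{(p-1)/2}\equiv(\frac p3)$), landing exactly on the known congruence for $\sum_{j=1}^{(p-1)/2}\frac{H_j-2H_{2j}}{j(-3)^j}$ from Liu's preprint, which supplies the value $\frac16B_{p-2}(\frac13)$. The partial-fraction step and the final reflection are the two ideas missing from your sketch; without them (or an equivalent identity, e.g.\ one produced by a summation package), the argument does not close.
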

\begin{proof}
First we have
\begin{align*}
&\sum_{k=1}^{(p-1)/2}\frac{(-3)^k}k\sum_{j=1}^k\frac1{(2j-1)(-3)^j}=\sum_{k=1}^{(p-1)/2}\frac1k\sum_{j=1}^k\frac{(-3)^{k-j}}{2j-1}=\sum_{k=1}^{(p-1)/2}\frac1k\sum_{j=0}^{k-1}\frac{(-3)^j}{2k-2j-1}\\
&=\sum_{j=0}^{(p-3)/2}(-3)^j\sum_{k=j+1}^{(p-1)/2}\frac1{k(2k-2j-1)}=\sum_{j=0}^{(p-3)/2}\frac{(-3)^j}{2j+1}\sum_{k=j+1}^{(p-1)/2}\left(\frac2{2k-2j-1}-\frac1k\right).
\end{align*}
It is easy to check that
$$\sum_{k=j+1}^{(p-1)/2}\frac2{2k-2j-1}=\sum_{k=1}^{(p-1)/2-j}\frac2{2k-1}=2\left(H_{p-1-2j}-\frac12H_{\frac{p-1}2-j}\right)=2H_{p-1-2j}-H_{\frac{p-1}2-j}.$$
So
$$\sum_{k=1}^{(p-1)/2}\frac{(-3)^k}k\sum_{j=1}^k\frac1{(2j-1)(-3)^j}=\sum_{j=0}^{(p-3)/2}\frac{(-3)^j}{2j+1}\left(2H_{p-1-2j}-H_{\frac{p-1}2-j}-H_{\frac{p-1}2}+H_j\right).$$
It is easy to see that $H_{p-1-2j}\equiv H_{2j}\pmod p$ and $H_{\frac{p-1}2-j}-H_{\frac{p-1}2}\equiv 2H_{2j}-H_j\pmod p$.
Hence 
$$\sum_{k=1}^{(p-1)/2}\frac{(-3)^k}k\sum_{j=1}^k\frac1{(2j-1)(-3)^j}\equiv\sum_{j=0}^{(p-3)/2}\frac{(-3)^j}{2j+1}\left(2H_j-2H_{\frac{p-1}2}\right)\pmod p.$$
It is obvious that
\begin{align*}
\sum_{j=0}^{(p-3)/2}\frac{(-3)^j}{2j+1}\left(H_j-H_{\frac{p-1}2}\right)&=\sum_{j=1}^{(p-1)/2}\frac{(-3)^{(p-1)/2-j}}{p-2j}\left(H_{\frac{p-1}2-j}-H_{\frac{p-1}2}\right)\\
&\equiv\frac{(-3)^{(p-1)/2}}2\sum_{j=1}^{(p-1)/2}\frac{H_j-2H_{2j}}{j(-3)^j}\pmod p.
\end{align*}
Therefore in view of \cite[(1.3)]{liu-arxiv-2020}, and by the fact that $(-3)^{(p-1)/2}\equiv\left(\frac{-3}p\right)=\left(\frac{p}3\right)\pmod p$, we immediately get the desired result.
\end{proof}
\begin{lem}\label{2j2k} For any prime $p>3$, we have
$$\sum_{k=1}^{(p-1)/2}\frac{1}{k\binom{2k}k}\sum_{j=1}^k\frac{\binom{2j}j}j\equiv\frac13B_{p-2}\left(\frac13\right)\pmod{p}.$$
\end{lem}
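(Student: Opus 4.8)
The plan is to show that, modulo $p$, the left-hand side is congruent to \emph{twice} the double sum evaluated in Lemma~\ref{3-3}; since the right-hand side of Lemma~\ref{3-3} is $\frac16B_{p-2}\left(\frac13\right)$, this yields the claimed value $\frac13B_{p-2}\left(\frac13\right)$. Equivalently, the goal is to reduce the central-binomial double sum, after the standard harmonic-number bookkeeping, to (twice) the sum $\sum_{j=1}^{(p-1)/2}(H_j-2H_{2j})/(j(-3)^j)$ that appears in the proof of Lemma~\ref{3-3}, and then to invoke \cite[(1.3)]{liu-arxiv-2020}.

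First I would pass from central binomial coefficients to the binomials $\binom{(p-1)/2}{k}$ by means of the classical congruence $\binom{2k}{k}\equiv(-4)^k\binom{(p-1)/2}{k}\pmod p$ (valid for $0\le k\le\frac{p-1}2$), applying it both to $\binom{2j}{j}$ in the numerator of the inner sum and to $\binom{2k}{k}$ in the denominator of the outer sum. Modulo $p$ the double sum then becomes an expression built from $\binom{(p-1)/2}{k}$, its reciprocal, and powers of $-4$; the powers of $4$ are exactly what will produce the powers of $-3$ on the Lemma~\ref{3-3} side once one interchanges the order of summation and applies the reflection $k\mapsto\frac{p-1}2-k$ (using $\binom{(p-1)/2}{(p-1)/2-k}=\binom{(p-1)/2}{k}$). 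The harmonic-number congruences already exploited in the proof of Lemma~\ref{3-3} --- namely $H_{p-1-m}\equiv H_m$ and $H_{(p-1)/2-m}-H_{(p-1)/2}\equiv 2H_{2m}-H_m\pmod p$ --- should then allow one to rewrite the result in the desired form $\sum_{j}\frac{(-3)^j}{j}\big(\text{a combination of }H_j,\,H_{2j},\,H_{(p-1)/2}\big)$.

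I expect the middle step to be the main obstacle. Although the final value is merely twice that of Lemma~\ref{3-3}, the two double sums do not agree term by term: the inner partial sums $\sum_{j\le k}\binom{2j}{j}/j$ and $\sum_{j\le k}1/((2j-1)(-3)^j)$ are genuinely different, so the equivalence is a ``global'' identity that must be produced by a careful summation-by-parts together with the binomial and harmonic-number congruences above. Keeping track of the spurious harmonic-number terms --- and of the reciprocals $\binom{(p-1)/2}{k}^{-1}$ introduced in the first step --- and checking that they all cancel, leaving only the combination already evaluated in Lemma~\ref{3-3}, is where the real work lies; the rest is routine arithmetic modulo $p$.
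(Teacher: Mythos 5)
Your proposal correctly identifies the target --- the sum should come out to twice the quantity in Lemma~\ref{3-3} --- but the actual bridge between the two double sums is never built, and the route you sketch for building it is not the one that works. The paper's proof has two essential moves that are absent from your outline. First, it uses the congruence $\sum_{j=1}^{(p-1)/2}\binom{2j}{j}/j\equiv 0\pmod p$ (together with $H_{(p-1)/2}^{(2)}\equiv 0$) to replace the inner partial sum $\sum_{j=1}^{k}$ by the negative of the tail $\sum_{j=k+1}^{(p-1)/2}$, and then swaps the order of summation; this turns the sum into $-\sum_{j}\frac{\binom{2j}{j}}{j}\sum_{k=1}^{j}\frac{1}{k\binom{2k}{k}}$, i.e.\ it moves the factor $\binom{2j}{j}/j$ to the \emph{outside} and the reciprocal central binomials to the \emph{inside}. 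Second, it applies $\binom{2j}{j}\equiv(-4)^j\binom{(p-1)/2}{j}\pmod p$ only to that outer numerator binomial --- the inner $1/(k\binom{2k}{k})$ is deliberately left untouched --- and then invokes the exact polynomial identity
$$\sum_{j=1}^{n}\frac{\binom{n}{j}(-4)^j}{j}\sum_{k=1}^{j}\frac{1}{k\binom{2k}{k}}=-2\sum_{k=1}^{n}\frac{(-3)^k}{k}\sum_{j=1}^{k}\frac{1}{(2j-1)(-3)^j},$$
valid for all $n$ (found by the summation package \textit{Sigma}), at $n=(p-1)/2$. That identity is the entire content of the lemma beyond Lemma~\ref{3-3}; your proposal explicitly defers it (``the equivalence is a `global' identity that must be produced\dots is where the real work lies''), which is precisely the gap.

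Moreover, the specific mechanism you propose --- converting \emph{both} $\binom{2j}{j}$ and the denominator $\binom{2k}{k}$ into $(-4)^{\bullet}\binom{(p-1)/2}{\bullet}$, then reflecting $k\mapsto\frac{p-1}{2}-k$ and reusing the harmonic-number congruences from Lemma~\ref{3-3} --- is unlikely to close that gap: the reciprocals $\binom{(p-1)/2}{k}^{-1}$ you introduce do not cancel under the reflection (the inner sum depends on $k$ through its upper limit, not symmetrically), and the harmonic-number congruences $H_{p-1-m}\equiv H_m$ and $H_{(p-1)/2-m}-H_{(p-1)/2}\equiv 2H_{2m}-H_m$ have no natural entry point here because no harmonic numbers arise until after the combinatorial identity has already been applied. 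So as written the proposal is an accurate guess at the answer plus a heuristic for why it should reduce to Lemma~\ref{3-3}, but not a proof; you would need to either prove the displayed identity (e.g.\ by creative telescoping or induction on $n$) or find a genuinely different evaluation of $\sum_{j=1}^{(p-1)/2}\frac{\binom{2j}{j}}{j}\sum_{k=1}^{j}\frac{1}{k\binom{2k}{k}}$ modulo $p$.
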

\begin{proof}
In view of \cite{sun-scm-2011}, we have
$$\sum_{j=1}^{(p-1)/2}\frac{\binom{2j}j}j\equiv0\pmod p.$$
This, with $H_{(p-1)/2}^{(2)}\equiv0\pmod p$ yields that
\begin{align*}
\sum_{k=1}^{(p-1)/2}\frac{1}{k\binom{2k}k}\sum_{j=1}^k\frac{\binom{2j}j}j&\equiv-\sum_{k=1}^{(p-1)/2}\frac{1}{k\binom{2k}k}\sum_{j=k+1}^{(p-1)/2}\frac{\binom{2j}j}j=-\sum_{j=2}^{(p-1)/2}\frac{\binom{2j}j}j\sum_{k=1}^{j-1}\frac1{k\binom{2k}k}\\
&=-\sum_{j=1}^{(p-1)/2}\frac{\binom{2j}j}j\sum_{k=1}^{j}\frac1{k\binom{2k}k}+H_{(p-1)/2}^{(2)}\\
&\equiv-\sum_{j=1}^{(p-1)/2}\frac{\binom{2j}j}j\sum_{k=1}^{j}\frac1{k\binom{2k}k}\pmod p.
\end{align*}
It is well known that $\binom{2j}j\equiv\binom{(p-1)/2}j(-4)^j\pmod p$ for each $j=0,1,\ldots,(p-1)/2$, so
$$\sum_{k=1}^{(p-1)/2}\frac{1}{k\binom{2k}k}\sum_{j=1}^k\frac{\binom{2j}j}j\equiv-\sum_{j=1}^{(p-1)/2}\frac{\binom{(p-1)/2}j(-4)^j}j\sum_{k=1}^{j}\frac1{k\binom{2k}k}\pmod p.$$
By \textit{Sigma} we have the following identity
$$\sum_{j=1}^{n}\frac{\binom{n}j(-4)^j}j\sum_{k=1}^{j}\frac1{k\binom{2k}k}=-2\sum_{k=1}^{n}\frac{(-3)^k}k\sum_{j=1}^k\frac1{(2j-1)(-3)^j}.$$
Setting $n=(p-1)/2$ in the above identity, we have
$$\sum_{k=1}^{(p-1)/2}\frac{1}{k\binom{2k}k}\sum_{j=1}^k\frac{\binom{2j}j}j\equiv2\sum_{k=1}^{(p-1)/2}\frac{(-3)^k}k\sum_{j=1}^k\frac1{(2j-1)(-3)^j}\pmod p,$$
then we immediately get the desired result by Lemma \ref{3-3}.
\end{proof}
\begin{lem}\label{p-2k} Let $p>3$ be a prime. Then for each $k=1,2,\ldots,(p-1)/2$ we have
$$\sum_{j=2}^{p-2k}\frac{\binom{k+j-1}{k}}{k+j}\equiv(-1)^k\left(\frac32\sum_{j=1}^k\frac{\binom{2j}j}j-\frac{\binom{2k}k}k\right)-\frac1{k+1}\pmod p.$$
\end{lem}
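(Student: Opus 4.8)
The first move is to reindex the sum by $i=k+j$. Since $\binom{i-1}{k}=0$ for $1\le i\le k$ and the term $i=k+1$ contributes $\binom{k}{k}/(k+1)=1/(k+1)$, we get
$$\sum_{j=2}^{p-2k}\frac{\binom{k+j-1}{k}}{k+j}=\sum_{i=k+2}^{p-k}\frac{\binom{i-1}{k}}{i}=\sum_{i=1}^{p-k}\frac{\binom{i-1}{k}}{i}-\frac1{k+1},$$
so it suffices to prove $\displaystyle\sum_{i=1}^{p-k}\binom{i-1}{k}/i\equiv(-1)^k\big(\tfrac32\sum_{j=1}^k\binom{2j}{j}/j-\binom{2k}{k}/k\big)\pmod p$.

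The key device for the remaining sum is division by $i$ as polynomials. Writing $(i-1)(i-2)\cdots(i-k)=i\,g(i)+(-1)^kk!$ with $g$ an integer polynomial of degree $k-1$, we have $\binom{i-1}{k}/i=g(i)/k!+(-1)^k/i$, hence
$$\sum_{i=1}^{p-k}\frac{\binom{i-1}{k}}{i}=\frac1{k!}\sum_{i=1}^{p-k}g(i)+(-1)^kH_{p-k}.$$
Here $H_{p-k}\equiv H_{k-1}\pmod p$ (pair $i$ with $p-i$ and use $H_{p-1}\equiv0$), and since $\deg g=k-1\le(p-3)/2<p-1$ the standard power-sum congruences $\sum_{i=1}^{p-1}i^m\equiv0\pmod p$ for $1\le m\le p-2$ give $\sum_{i=1}^{p-1}g(i)\equiv(p-1)g(0)\equiv-g(0)$. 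Splitting $\sum_{i=1}^{p-k}g(i)=\sum_{i=1}^{p-1}g(i)-\sum_{s=1}^{k-1}g(p-s)$ and using $g(p-s)\equiv g(-s)\pmod p$, the computation comes down to evaluating $g$ at $0$ and at $-1,\dots,-(k-1)$. Reading off coefficients from $(i-1)\cdots(i-k)$ gives $g(0)=(-1)^{k+1}k!\,H_k$, and from $(-s-1)\cdots(-s-k)=(-1)^kk!\binom{s+k}{k}$ one gets $g(-s)=(-1)^{k+1}k!\big(\binom{s+k}{k}-1\big)/s$. Assembling (and using $H_k-H_{k-1}=1/k$) yields
$$\sum_{i=1}^{p-k}\frac{\binom{i-1}{k}}{i}\equiv(-1)^k\Big(H_k+\sum_{s=1}^{k-1}\frac{\binom{s+k}{k}}{s}\Big)\pmod p.$$

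It then remains to match this with the target, i.e. to establish the exact rational identity
$$H_k+\sum_{s=1}^{k}\frac{\binom{s+k}{k}}{s}=\frac32\sum_{j=1}^k\frac{\binom{2j}{j}}{j},$$
from which the desired statement follows by subtracting $\binom{2k}{k}/k$ and combining with the two displays above. I would prove this identity by induction on $k$: the inductive step, via Pascal's rule $\binom{s+k}{k}-\binom{s+k-1}{k-1}=\binom{s+k-1}{k}$, reduces to $\sum_{s=1}^{k-1}\binom{s+k-1}{k}/s=\big(\binom{2k-1}{k-1}-1\big)/k$, and this follows from the absorption identity $\binom{s+k-1}{k}/s=\binom{s+k-1}{k-1}/k$ together with the hockey-stick identity $\sum_{u=0}^{k-1}\binom{k-1+u}{u}=\binom{2k-1}{k-1}$ and $\binom{2k}{k}=2\binom{2k-1}{k-1}$; alternatively the identity can be produced directly by \emph{Sigma}. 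I expect the main obstacle to be purely bookkeeping: getting every sign right in the evaluations of $g(0)$ and the $g(-s)$, and recognizing the clean closed form $\big(\binom{2k-1}{k-1}-1\big)/k$ that links $\sum_s\binom{s+k}{k}/s$ to $\sum_j\binom{2j}{j}/j$. The reindexing, the congruence $H_{p-k}\equiv H_{k-1}$, and the vanishing of the power sums are all routine.
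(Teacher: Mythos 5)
Your proposal is correct, and it takes a genuinely different route from the paper's proof. After the common reindexing step (which you and the paper perform identically, arriving at $\sum_{i=k+1}^{p-k}\binom{i-1}{k}/i-\frac1{k+1}$), the paper invokes Gould's identity $\sum_{j=a}^n(-1)^{j-a}\binom nj\frac1j=\sum_{j=a}^n\binom{j-1}{a-1}\frac1j$ to convert the sum into an alternating binomial sum, evaluates the full range via $\sum_{j=1}^n(-1)^j\binom nj\frac1j=-H_n$, reduces $\binom{p-k}{j}$ to $\binom{-k}{j}$, and then quotes a computer-generated (\emph{Sigma}) identity for $\sum_{j=1}^n\frac1j\binom{n+j-1}{j}$. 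You instead split off the pole directly by the polynomial division $(i-1)\cdots(i-k)=i\,g(i)+(-1)^kk!$, which cleanly separates the sum into $(-1)^kH_{p-k}$ plus a polynomial sum handled by the standard congruences $\sum_{i=1}^{p-1}i^m\equiv0\pmod p$ for $1\le m\le p-2$ (the degree bound $k-1\le(p-3)/2$ makes this legitimate); the evaluations $g(0)=(-1)^{k+1}k!H_k$ and $g(-s)=(-1)^{k+1}k!\bigl(\binom{s+k}{k}-1\bigr)/s$ are both correct, and your assembled congruence $\sum_{i=1}^{p-k}\binom{i-1}{k}/i\equiv(-1)^k\bigl(H_k+\sum_{s=1}^{k-1}\binom{s+k}{k}/s\bigr)$ checks out. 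Your closing identity $H_k+\sum_{s=1}^{k}\binom{s+k}{k}/s=\frac32\sum_{j=1}^k\binom{2j}{j}/j$ is the hand-provable analogue of the paper's \emph{Sigma} identity, and your inductive argument for it (Pascal, absorption $\binom{s+k-1}{k}/s=\binom{s+k-1}{k-1}/k$, hockey-stick, and $\binom{2k}{k}=2\binom{2k-1}{k-1}$, giving increment $\frac32\binom{2k}{k}/k$ on both sides) is complete and correct; I verified it for $k=1,2,3$ as well. The net effect is that your argument is entirely self-contained and elementary, avoiding both the Gould table lookup and the reliance on computer algebra, at the cost of somewhat more sign bookkeeping around $g$.
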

\begin{proof}
First we have
$$\sum_{j=2}^{p-2k}\frac{\binom{k+j-1}{k}}{k+j}=\sum_{j=k+2}^{p-k}\frac{\binom{j-1}k}j=\sum_{j=k+1}^{p-k}\frac{\binom{j-1}k}j-\frac1{k+1}.$$
In view of \cite[(4.37)]{g-online}, we have
$$\sum_{j=a}^n(-1)^{j-a}\binom{n}j\frac1j=\sum_{j=a}^n\binom{j-1}{a-1}\frac1j\ \ (1\leq a\leq n).$$
Hence 
\begin{align*}
\sum_{j=k+1}^{p-k}\frac{\binom{j-1}k}j&=\sum_{j=k+1}^{p-k}(-1)^{j-k-1}\binom{p-k}j\frac1j\\
&=\sum_{j=1}^{p-k}(-1)^{j-k-1}\binom{p-k}j\frac1j-\sum_{j=1}^{k}(-1)^{j-k-1}\binom{p-k}j\frac1j.
\end{align*}
By \textit{Sigma} we have $\sum_{j=1}^n(-1)^j\binom{n}j\frac1j=-H_n$, so
$$\sum_{j=k+1}^{p-k}\frac{\binom{j-1}k}j=(-1)^kH_{p-k}-\sum_{j=1}^{k}(-1)^{j-k-1}\binom{p-k}j\frac1j.$$
It is known that $\binom{-n}k=(-1)^k\binom{n+k-1}k$, hence
$$\sum_{j=1}^{k}(-1)^{j-k-1}\binom{p-k}j\frac1j\equiv\sum_{j=1}^{k}(-1)^{j-k-1}\binom{-k}j\frac1j=(-1)^{k+1}\sum_{j=1}^k\frac1j\binom{k+j-1}j\pmod p.$$
By \textit{Sigma} we have
$$\sum_{j=1}^n\frac1j\binom{n+j-1}j=\frac{1+3n}n-\frac{\binom{2n}n}n-H_n+\frac32\sum_{j=2}^n\frac{\binom{2j}j}j.$$
Thus 
\begin{align*}
\sum_{j=1}^{k}(-1)^{j-k-1}\binom{p-k}j\frac1j&\equiv(-1)^{k+1}\left(\frac{1+3k}k-\frac{\binom{2k}k}k-H_k+\frac32\sum_{j=2}^k\frac{\binom{2j}j}j\right)\\
&=(-1)^{k+1}\left(\frac32\sum_{j=1}^k\frac{\binom{2j}j}j-\frac{\binom{2k}k}k-H_{k-1}\right)\pmod p.
\end{align*}
Since $H_{p-k}\equiv H_{k-1}\pmod p$, therefore
$$\sum_{j=2}^{p-2k}\frac{\binom{k+j-1}{k}}{k+j}\equiv(-1)^{k}\left(\frac32\sum_{j=1}^k\frac{\binom{2j}j}j-\frac{\binom{2k}k}k\right)-\frac1{k+1}\pmod p.$$
Now the proof of Lemma \ref{p-2k} is complete.
\end{proof}
\begin{lem}\label{p-2k-k} Let $p>3$ be a prime. Then for each $k=1,2,\ldots,(p-1)/2$ we have 
$$\sum_{j=0}^{p-2k}\binom{-k}{k+j}\frac{(-1)^{k+j}}{\binom{k+j}k}\equiv\frac{3k}2\sum_{j=1}^k\frac{\binom{2j}j}j-\frac32\binom{2k}k\pmod p.$$
\end{lem}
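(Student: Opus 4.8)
The plan is to evaluate the left-hand side by writing its summand, up to an explicit forward difference, as a fixed multiple of the summand of Lemma~\ref{p-2k}, and then invoking that lemma. First I would convert the negative binomial coefficient: since $\binom{-k}{k+j}(-1)^{k+j}=\binom{2k+j-1}{k-1}$, the summand $g_j:=\binom{-k}{k+j}(-1)^{k+j}/\binom{k+j}{k}$ equals $\binom{2k+j-1}{k-1}/\binom{k+j}{k}=k\,\prod_{i=1}^{k-1}(j+k+i)/\prod_{i=1}^{k}(j+i)$, a \emph{proper} rational function of $j$ whose partial fraction expansion is $k\sum_{i=1}^{k}c_i/(j+i)$ with $c_i=(-1)^{i-1}(2k-i-1)!/((i-1)!\,((k-i)!)^2)$. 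Comparing the behaviour of both sides as $j\to\infty$ gives the identity $\sum_{i=1}^{k}c_i=1$, which will be the key ingredient.

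Write $h_j=\binom{k+j-1}{k}/(k+j)=\prod_{l=0}^{k-1}(j+l)/(k!\,(j+k))$ for the summand of Lemma~\ref{p-2k}; as a rational function $h_j=P(j)+(-1)^k/(j+k)$ with $\deg P=k-1$. The relation $\sum_i c_i=1$ is precisely what makes the fractional part of $g_j-(-1)^k k\,h_j$ equal to $\Delta_j\big(\sum_{i=1}^{k-1}d_i/(j+i)\big)$, where $\Delta_jF_j:=F_{j+1}-F_j$ and $d_i=-k\sum_{l=1}^{i}c_l$, while the polynomial part $-(-1)^k k\,P(j)$ has an explicit polynomial discrete antiderivative; hence $g_j=(-1)^k k\,h_j+\Delta_j F_j$ as rational functions of $j$, for an explicit $F_j$. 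Summing over $j=0,1,\dots,p-2k$ and telescoping gives $\sum_{j=0}^{p-2k}g_j=(-1)^k k\sum_{j=0}^{p-2k}h_j+F_{p-2k+1}-F_0$, and since $h_0=0$ and $h_1=1/(k+1)$, Lemma~\ref{p-2k} turns the first term into $\tfrac{3k}{2}\sum_{j=1}^{k}\binom{2j}{j}/j-\binom{2k}{k}$ modulo $p$.

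What remains is to show $F_{p-2k+1}-F_0\equiv-\tfrac12\binom{2k}{k}\pmod p$, which is where I expect the real work to lie. Reducing $F_{p-2k+1}$ modulo $p$ is elementary in its fractional part — the denominators $p-2k+1+i$ with $1\le i\le k-1$ all lie in $[3,p-1]$, so each is congruent to $-(2k-1-i)$ — but the polynomial part of $F$ (the discrete antiderivative of $P$, a degree-$k$ polynomial with $k$-dependent coefficients) must be evaluated at $p-2k+1\equiv 1-2k\pmod p$, and one has to check that, together with the fractional part and the value $F_0$, everything collapses to exactly $-\tfrac12\binom{2k}{k}$. I would carry this out either by producing a manageable closed form for that antiderivative by hand, or — more efficiently — by letting \textit{Sigma} produce a closed form for the partial sum $\sum_{j=0}^{n-2k}g_j$ outright, then specializing $n=p$ and using the identity $\sum_{j=1}^{n}\tfrac1j\binom{n+j-1}{j}=\tfrac{1+3n}{n}-\tfrac{\binom{2n}{n}}{n}-H_n+\tfrac32\sum_{j=2}^{n}\binom{2j}{j}/j$ from the proof of Lemma~\ref{p-2k} to match the binomial sum on the right. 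The case $k=1$ (where $F$ has no fractional part and the sum is $H_{p-1}$) should be checked separately and is immediate, since both sides vanish.
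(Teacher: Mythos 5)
Your reduction is set up correctly: the conversion $\binom{-k}{k+j}(-1)^{k+j}/\binom{k+j}{k}=k\prod_{i=1}^{k-1}(j+k+i)/\prod_{i=1}^{k}(j+i)$, the residues $c_i$, the normalization $\sum_i c_i=1$, and the observation that this normalization is exactly what lets you absorb the fractional part of $g_j-(-1)^k k\,h_j$ into a telescoping difference are all right, and the denominators $j+i$ and $p-2k+1+i$ indeed avoid $0\bmod p$ in the relevant ranges. The application of Lemma \ref{p-2k} to the $h_j$-sum (after adding back $h_1=1/(k+1)$) is also correct and yields $\tfrac{3k}{2}\sum_{j=1}^{k}\binom{2j}{j}/j-\binom{2k}{k}$.

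However, there is a genuine gap: the entire remaining content of the lemma is the evaluation $F_{p-2k+1}-F_0\equiv-\tfrac12\binom{2k}{k}\pmod p$, and you do not prove it — you only name it as "where the real work lies" and describe two strategies you \emph{would} pursue. The value $-\tfrac12\binom{2k}{k}$ is obtained by subtracting what Lemma \ref{p-2k} gives from the desired right-hand side, so asserting it is circular; nothing in your argument independently produces it. Nor is it a routine verification: $F$ contains the degree-$k$ discrete antiderivative $Q$ of $-(-1)^k kP(j)$, and evaluating $Q(1-2k)-Q(0)$ amounts to summing $P(j)$ over $j=1-2k,\dots,-1$, a range passing through $j=-k$ where $h_j$ has its pole, so the polynomial part must be handled with some care; the fractional contributions $\sum_i d_i/(1-2k+i)-\sum_i d_i/i$ with $d_i=-k\sum_{l\le i}c_l$ then have to combine with it to give exactly $-\tfrac12\binom{2k}{k}$. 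This is precisely the computation that the paper outsources to a \textit{Sigma}-certified closed form for $\sum_{j=1}^{n}\binom{n+k}{j}/\binom{n+k-1}{n-j}$ (specialized at $n=p-2k$), whose non-sum terms $\tfrac{k}{k+1}+\tfrac{2(-1)^k}{\binom{2k}{k}}-\tfrac{(-1)^k}{2}\binom{2k}{k}$, together with the $j=0$ term, supply the missing $-\tfrac12\binom{2k}{k}$. Your option (b) is essentially that route, but until you exhibit and verify such a closed form (or carry out the boundary evaluation by hand), the proof is incomplete at its decisive step.
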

\begin{proof}
We know that 
\begin{align*}
\sum_{j=0}^{p-2k}\binom{-k}{k+j}\frac{(-1)^{j}}{\binom{k+j}k}&\equiv\sum_{j=0}^{p-2k}\binom{p-k}{k+j}\frac{1}{\binom{p-k-1}j}=\sum_{j=0}^{p-2k}\frac{\binom{p-k}{p-2k-j}}{\binom{p-k-1}j}\\
&=\sum_{j=0}^{p-2k}\binom{p-k}{j}\frac{1}{\binom{p-k-1}{p-2k-j}}\pmod p.
\end{align*}
By \textit{Sigma}, we have the following identity,
$$\sum_{j=1}^n\frac{\binom{n+k}j}{\binom{n+k-1}{n-j}}=-\frac{n+k}{k+1}-\frac{n}{k\binom{n+k-1}{n-1}}+\frac{(n+k)^2\binom{n+k-1}{n-1}}{nk}-(n+k)\sum_{j=2}^n\frac{\binom{k+j-1}k}{k+j}.$$
Setting $n=p-2k$, we have
$$\sum_{j=1}^{p-2k}\frac{\binom{p-k}j}{\binom{p-k-1}{p-2k-j}}\equiv\frac{k}{k+1}+\frac{2(-1)^k}{\binom{2k}{k}}-\frac{(-1)^k}{2}\binom{2k}k+k\sum_{j=2}^{p-2k}\frac{\binom{k+j-1}k}{k+j}\pmod p.$$
Hence
\begin{align*}
\sum_{j=0}^{p-2k}\binom{-k}{k+j}\frac{(-1)^{j}}{\binom{k+j}k}&\equiv\frac{k}{k+1}+\frac{2(-1)^k}{\binom{2k}{k}}-\frac{(-1)^k}{2}\binom{2k}k+k\sum_{j=2}^{p-2k}\frac{\binom{k+j-1}k}{k+j}+\frac1{\binom{-k-1}{k-1}}\\
&=\frac{k}{k+1}-\frac{(-1)^k}{2}\binom{2k}k+k\sum_{j=2}^{p-2k}\frac{\binom{k+j-1}k}{k+j}\pmod p.
\end{align*}
By Lemma \ref{p-2k}, we have
\begin{align*}
\sum_{j=0}^{p-2k}\binom{-k}{k+j}\frac{(-1)^{k+j}}{\binom{k+j}k}&\equiv(-1)^k\frac{k}{k+1}-\frac{1}{2}\binom{2k}k+(-1)^kk\sum_{j=2}^{p-2k}\frac{\binom{k+j-1}k}{k+j}\\
&\equiv(-1)^k\frac{k}{k+1}-\frac{1}{2}\binom{2k}k+k\left(\frac32\sum_{j=1}^k\frac{\binom{2j}j}j-\frac{\binom{2k}k}k\right)-\frac{(-1)^k}{k+1}\\
&=\frac{3k}2\sum_{j=1}^k\frac{\binom{2j}j}j-\frac32\binom{2k}k\pmod p.
\end{align*}
Now we finish the proof of Lemma \ref{p-2k-k}.
\end{proof}
\begin{lem}\label{2kkp+1} For any prime $p>3$, we have
$$\sum_{k=(p+1)/2}^{p-1}\binom{2k}k\sum_{j=p-k}^k\binom{k}j\binom{k+j}j(-1)^{k+j}\equiv-p^2B_{p-2}\left(\frac13\right)\pmod{p^3}.$$
\end{lem}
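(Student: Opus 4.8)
The plan is to reindex the double sum so that its inner sum becomes, up to harmless factors, the quantity evaluated in Lemma~\ref{p-2k-k}, then to pull out the two factors of $p$ hidden in the summand, and finally to reduce everything modulo $p$ with Lemmas~\ref{2j2k} and~\ref{p-2k-k}. First I would put $k=p-\kappa$, so $\kappa$ runs over $1,\dots,(p-1)/2$, and $j=\kappa+i$, so $i$ runs over $0,\dots,p-2\kappa$, which is exactly the range $p-k\le j\le k$. Since $k+j=p+i$ and $p$ is odd, $(-1)^{k+j}=-(-1)^i$, and the double sum becomes
$$-\sum_{\kappa=1}^{(p-1)/2}\binom{2p-2\kappa}{p-\kappa}\sum_{i=0}^{p-2\kappa}(-1)^i\binom{p-\kappa}{\kappa+i}\binom{p+i}{\kappa+i}.$$

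For $1\le\kappa\le(p-1)/2$ and $0\le i\le p-2\kappa$, each of $\binom{2p-2\kappa}{p-\kappa}$ and $\binom{p+i}{\kappa+i}$ is divisible by $p$ exactly once: the numerators $(2p-2\kappa)!$ and $(p+i)!$ each contain a single factor $p$, while the denominator factorials $(p-\kappa)!$ and $(\kappa+i)!$ divide $(p-1)!$ and hence are prime to $p$; the remaining factor $\binom{p-\kappa}{\kappa+i}$ is a $p$-adic unit. Thus every summand is already divisible by $p^2$, so modulo $p^3$ it suffices to know the ``$p$-free parts'' of the first two binomials modulo $p$ together with $\binom{p-\kappa}{\kappa+i}$ modulo $p$. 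Using Wilson's theorem and the elementary identity $(p-m)!\equiv(-1)^m/(m-1)!\pmod p$ for $1\le m\le p-1$, one gets $\frac1p\binom{2p-2\kappa}{p-\kappa}\equiv-2/(\kappa\binom{2\kappa}\kappa)$ and $\frac1p\binom{p+i}{\kappa+i}\equiv(-1)^{\kappa+1}/(\kappa\binom{\kappa+i}\kappa)$ modulo $p$, while $\binom{p-\kappa}{\kappa+i}\equiv\binom{-\kappa}{\kappa+i}\pmod p$ is immediate.

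Substituting these into the reindexed sum, collecting the signs and the stray factor $\kappa$, and writing $(-1)^i=(-1)^\kappa(-1)^{\kappa+i}$, the inner sum turns into $(-1)^\kappa\sum_{i=0}^{p-2\kappa}\binom{-\kappa}{\kappa+i}\frac{(-1)^{\kappa+i}}{\binom{\kappa+i}\kappa}$, which by Lemma~\ref{p-2k-k} is $\equiv(-1)^\kappa\bigl(\tfrac{3\kappa}2\sum_{j=1}^\kappa\frac{\binom{2j}j}j-\tfrac32\binom{2\kappa}\kappa\bigr)\pmod p$. After cancelling the constants, the double sum becomes
$$-3p^2\sum_{\kappa=1}^{(p-1)/2}\frac1{\kappa\binom{2\kappa}\kappa}\sum_{j=1}^\kappa\frac{\binom{2j}j}j+3p^2H_{(p-1)/2}^{(2)}\pmod{p^3}.$$
Since $H_{(p-1)/2}^{(2)}\equiv0\pmod p$ the last term vanishes modulo $p^3$, and Lemma~\ref{2j2k} evaluates the remaining sum as $\tfrac13B_{p-2}(1/3)$ modulo $p$; hence the whole expression is $\equiv-p^2B_{p-2}(1/3)\pmod{p^3}$, which is exactly the claim.

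The main obstacle I anticipate is the bookkeeping in the middle step: one must check that the constraints $\kappa\le(p-1)/2$ and $i\le p-2\kappa$ are precisely what forces the relevant $p$-adic valuations to equal $1$, $1$ and $0$ (so that no denominator factorial reaches $p!$), and one must carefully track the several competing signs and the factor $\kappa$ that migrates between the prefactor and $\binom{2\kappa}\kappa$, so that the constants $-3$ and $+3$ come out correctly. Everything after that is a direct application of Lemmas~\ref{p-2k-k} and~\ref{2j2k} together with the standard fact $H_{(p-1)/2}^{(2)}\equiv0\pmod p$.
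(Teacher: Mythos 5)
Your proposal is correct and follows essentially the same route as the paper: the substitution $k\mapsto p-\kappa$, $j\mapsto\kappa+i$, extraction of the single factor of $p$ from each of $\binom{2p-2\kappa}{p-\kappa}$ and $\binom{p+i}{\kappa+i}$ (the paper quotes $k\binom{2k}k\binom{2(p-k)}{p-k}\equiv-2p\pmod{p^2}$ from the literature where you derive the needed mod-$p$ unit parts via Wilson's theorem, which suffices here), and then the reduction via Lemma \ref{p-2k-k}, Lemma \ref{2j2k} and $H_{(p-1)/2}^{(2)}\equiv0\pmod p$ to reach $-p^2B_{p-2}(1/3)$.
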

\begin{proof}
It is known that $\binom{k+j}j\equiv0\pmod p$ for each $p-k\leq j\leq p-1$ with $k\in\{(p+1)/2,\ldots,p-1\}$, and in view of \cite{sun-scm-2011} 
$$k\binom{2k}k\binom{2(p-k)}{p-k}\equiv-2p\pmod{p^2}\ \mbox{for}\ \mbox{all}\ k=1,2,\ldots,(p-1)/2.$$
So 
\begin{align*}
&\sum_{k=(p+1)/2}^{p-1}\binom{2k}k\sum_{j=p-k}^k\binom{k}j\binom{k+j}j(-1)^{k+j}\\
&=\sum_{k=1}^{(p-1)/2}\binom{2p-2k}{p-k}\sum_{j=k}^{p-k}\binom{p-k}j\binom{p-k+j}j(-1)^{p-k+j}\\
&\equiv-2p\sum_{k=1}^{(p-1)/2}\frac1{k\binom{2k}k}\sum_{j=k}^{p-k}\binom{p-k}j\binom{p-k+j}j(-1)^{k+j+1}\pmod{p^3}.
\end{align*}
It is obvious that
$$\binom{p-k+j}j=\frac{(p-k+j)\cdots(p-k+1)}{j!}\equiv\frac{p(j-k)!(-1)^{k-1}(k-1)!}{j!}=\frac{p(-1)^{k-1}}{k\binom{j}k}\pmod{p^2}.$$
Hence
\begin{align*}
&\sum_{k=(p+1)/2}^{p-1}\binom{2k}k\sum_{j=p-k}^k\binom{k}j\binom{k+j}j(-1)^{k+j}\\
&\equiv-2p^2\sum_{k=1}^{(p-1)/2}\frac1{k^2\binom{2k}k}\sum_{j=k}^{p-k}\binom{-k}j\frac{(-1)^j}{\binom{j}k}\\
&=-2p^2\sum_{k=1}^{(p-1)/2}\frac1{k^2\binom{2k}k}\sum_{j=0}^{p-2k}\binom{-k}{k+j}\frac{(-1)^{k+j}}{\binom{k+j}k}\pmod{p^3}.
\end{align*}
By Lemma \ref{p-2k-k}, Lemma \ref{2j2k} and $H_{(p-1)/2}^{(2)}\equiv0\pmod p$, we have
\begin{align*}
\sum_{k=(p+1)/2}^{p-1}\binom{2k}k\sum_{j=p-k}^k\binom{k}j\binom{k+j}j(-1)^{k+j}&\equiv-2p^2\sum_{k=1}^{(p-1)/2}\frac1{k^2\binom{2k}k}\left(\frac{3k}2\sum_{j=1}^k\frac{\binom{2j}j}j-\frac32\binom{2k}k\right)\\
&=-3p^2\sum_{k=1}^{(p-1)/2}\frac1{k^2\binom{2k}k}\sum_{j=1}^k\frac{\binom{2j}j}j+3p^2H_{(p-1)/2}^{(2)}\\
&\equiv-p^2B_{p-2}\left(\frac13\right)\pmod{p^3}.
\end{align*}
Therefore the proof of Lemma \ref{2kkp+1} is finished.
\end{proof}
\section{Proof of Theorem \ref{ThAp}}
{\it Proof of Theorem \ref{ThAp}.} By \cite[Thm 2.1]{gz-jnt-2012}, we have
\begin{align*}
\frac1p\sum_{k=0}^{p-1}(-1)^k(2k+1)A_k=\sum_{k=0}^{p-1}\binom{2k}k\sum_{j=0}^k\binom{k}j\binom{k+j}j\binom{p-1}{k+j}\binom{p+k+j}{k+j}.
\end{align*}
It is easy to see that 
$$\binom{p-1}{k+j}\binom{p+k+j}{k+j}=\prod_{i=1}^{k+j}\frac{p^2-i^2}{i^2}\equiv(-1)^{k+j}\left(1-p^2H_{k+j}^{(2)}\right)\pmod{p^3}.$$
So 
\begin{align*}
\frac1p\sum_{k=0}^{p-1}(-1)^k(2k+1)A_k\equiv \theta_1+\theta_2\pmod{p^3},
\end{align*}
where 
$$\theta_1=\sum_{k=0}^{(p-1)/2}\binom{2k}k\sum_{j=0}^k\binom{k}j\binom{k+j}j(-1)^{k+j}\left(1-p^2H_{k+j}^{(2)}\right),$$
$$\theta_2=\sum_{k=(p+1)/2}^{p-1}\binom{2k}k\sum_{j=0}^{p-1-k}\binom{k}j\binom{k+j}j(-1)^{k+j}.$$
It is obvious that
\begin{align*}
\theta_2+\sum_{k=0}^{(p-1)/2}\binom{2k}k\sum_{j=0}^k\binom{k}j\binom{k+j}j(-1)^{k+j}&=\sum_{k=0}^{p-1}\binom{2k}k\sum_{j=0}^k\binom{k}j\binom{k+j}j(-1)^{k+j}\\
&-\sum_{k=(p+1)/2}^{p-1}\binom{2k}k\sum_{j=p-k}^{k}\binom{k}j\binom{k+j}j(-1)^{k+j}.
\end{align*}
By Chu-Vandermonde identity, we have 
$$\sum_{k=0}^{p-1}\binom{2k}k\sum_{j=0}^k\binom{k}j\binom{k+j}j(-1)^{k+j}=\sum_{k=0}^{p-1}\binom{2k}k.$$
In view of Lemma \ref{2kkp+1}, we have
$$\theta_2+\sum_{k=0}^{(p-1)/2}\binom{2k}k\sum_{j=0}^k\binom{k}j\binom{k+j}j(-1)^{k+j}\equiv\sum_{k=0}^{p-1}\binom{2k}k+p^2B_{p-2}\left(\frac13\right)\pmod{p^3}.$$
Hence
\begin{align*}
&\frac1p\sum_{k=0}^{p-1}(-1)^k(2k+1)A_k-\sum_{k=0}^{p-1}\binom{2k}k-p^2B_{p-2}\left(\frac13\right)\\
&\equiv-p^2\sum_{k=0}^{(p-1)/2}\binom{2k}k\sum_{j=0}^k\binom{k}j\binom{k+j}j(-1)^{k+j}H_{k+j}^{(2)}\\
&=-p^2\sum_{k=0}^{(p-1)/2}\binom{2k}k\sum_{j=k}^{2k}\binom{k}{j-k}\binom{j}{k}(-1)^{j}H_{j}^{(2)}\pmod{p^3}.
\end{align*}
By \cite[(4.1)]{liu-arxiv-2020}, we have
$$\sum_{j=k}^{2k}\binom{k}{j-k}\binom{j}{k}(-1)^{j}H_{j}^{(2)}=3\sum_{j=1}^k\frac1{j^2\binom{2j}j}.$$
So
$$\frac1p\sum_{k=0}^{p-1}(-1)^k(2k+1)A_k-\sum_{k=0}^{p-1}\binom{2k}k-p^2B_{p-2}\left(\frac13\right)\equiv-3p^2\sum_{k=1}^{(p-1)/2}\binom{2k}k\sum_{j=1}^k\frac1{j^2\binom{2j}j}\pmod{p^3}.$$
Therefore, by \cite{st-jnt-2013} and \cite[(4.4)]{liu-arxiv-2020}, we immediately get the desired result
$$\frac1p\sum_{k=0}^{p-1}(-1)^k(2k+1)A_k\equiv\left(\frac{p}3\right)+\frac{p^2}6B_{p-2}\left(\frac13\right)\pmod{p^3}.$$
Now we finish the proof of Theorem \ref{ThAp}.\qed
\section{Proof of Theorem \ref{Thfp}}
{\it Proof of Theorem \ref{Thfp}.} First we have the following identity
$$\sum_{k=0}^n\binom{n}k\binom{n+1+k}kf_k=\frac{(-1)^n}{n+1}\sum_{k=0}^n(-1)^k(2k+1)A_k.$$
Setting $n=p-1$ in the above identity, we have 
$$\sum_{k=0}^{p-1}\binom{p-1}k\binom{p+k}kf_k=\frac{1}{p}\sum_{k=0}^{p-1}(-1)^k(2k+1)A_k.$$
It is easy to check that
$$\binom{p-1}{k}\binom{p+k}{k}=\prod_{j=1}^{k}\frac{p^2-j^2}{j^2}\equiv(-1)^{k}\left(1-p^2H_{k}^{(2)}\right)\pmod{p^3}.$$
So we have
$$\sum_{k=0}^{p-1}(-1)^k\left(1-p^2H_k^{(2)}\right)f_k\equiv\frac{1}{p}\sum_{k=0}^{p-1}(-1)^k(2k+1)A_k\pmod{p^3}.$$
This, with Theorem \ref{ThAp} and \cite[(1.5)]{liu-arxiv-2020} yields that
$$\sum_{k=0}^{p-1}(-1)^kf_k\equiv\left(\frac{p}3\right)+\frac{2p^2}3B_{p-2}\left(\frac13\right)\pmod{p^3}.$$
Now the proof of Theorem \ref{Thfp} is completed.\qed

\noindent{\it Another proof of Theorem \ref{Thfp}.} In view of (\ref{fnidentity}), we have
\begin{align*}
\sum_{k=0}^{p-1}(-1)^kf_k&=\sum_{k=0}^{p-1}(-1)^k\sum_{j=0}^k\binom{k}j\binom{j}{k-j}\binom{2j}j=\sum_{j=0}^{p-1}\binom{2j}j\sum_{k=j}^{p-1}\binom{k}j\binom{j}{k-j}(-1)^k\\
&=\sum_{j=0}^{\frac{p-1}2}\binom{2j}j\sum_{k=j}^{2j}\binom{k}j\binom{j}{k-j}(-1)^k+\sum_{j=\frac{p+1}2}^{p-1}\binom{2j}j\sum_{k=j}^{p-1}\binom{k}j\binom{j}{k-j}(-1)^k\\
&=\sum_{j=0}^{\frac{p-1}2}\binom{2j}j\sum_{k=0}^{j}\binom{k+j}j\binom{j}{k}(-1)^{k+j}+\sum_{j=\frac{p+1}2}^{p-1}\binom{2j}j\sum_{k=0}^{p-1-j}\binom{k+j}j\binom{j}{k}(-1)^{k+j}.
\end{align*}
By Chu-Vandermonde identity, we have
$$\sum_{k=0}^{j}\binom{k+j}j\binom{j}{k}(-1)^{k+j}=1$$
Hence
\begin{align*}
\sum_{k=0}^{p-1}(-1)^kf_k&=\sum_{j=0}^{\frac{p-1}2}\binom{2j}j+\sum_{j=\frac{p+1}2}^{p-1}\binom{2j}j(-1)^j\left(\sum_{k=0}^{j}\binom{k+j}j\binom{j}{k}(-1)^k-\sum_{k=p-j}^{j}\binom{k+j}j\binom{j}{k}(-1)^k\right)\\
&=\sum_{j=0}^{p-1}\binom{2j}j-\sum_{j=\frac{p+1}2}^{p-1}\binom{2j}j\sum_{k=p-j}^{j}\binom{k+j}j\binom{j}{k}(-1)^{k+j}.
\end{align*}
By Lemma \ref{2kkp+1} and \cite{st-jnt-2013}, we have
$$\sum_{k=0}^{p-1}(-1)^kf_k\equiv\left(\frac{p}3\right)+\frac{2p^2}3B_{p-2}\left(\frac13\right)\pmod{p^3}.$$
Therefore the proof of Theorem \ref{Thfp} is complete.\qed

\vskip 3mm \noindent{\bf Acknowledgments.}
The author is funded by the Startup Foundation for Introducing Talent of Nanjing University of Information Science and Technology (2019r062).

\end{document}